\documentclass[a4paper, reqno]{amsart}
\usepackage[T1]{fontenc}
\usepackage{lmodern}
\usepackage{amsmath,amsthm,amssymb,mathtools}
\usepackage[nobysame, alphabetic]{amsrefs}
\usepackage[margin=32mm,marginpar=3cm]{geometry}

\usepackage[final]{microtype}
\usepackage{enumerate}
\usepackage{tikz}
\usetikzlibrary{cd, arrows}
\usepackage{tikz-cd}
\usepackage{url}
\usepackage{xcolor}
\usepackage{relsize}
\usepackage{verbatim}

\definecolor{darkblue}{rgb}{0,0,0.6}

\usepackage[breaklinks, pdftex, ocgcolorlinks,colorlinks=true, citecolor=darkblue, filecolor=darkblue, linkcolor=darkblue, urlcolor=black]{hyperref}

\usepackage[capitalize,noabbrev]{cleveref}

\makeatletter
\newtheorem*{rep@theorem}{\rep@title}
\newcommand{\newreptheorem}[2]{%
	\newenvironment{rep#1}[1]{%
		\def\rep@title{#2 \ref{##1}}%
		\begin{rep@theorem}}%
		{\end{rep@theorem}}}
\makeatother

\numberwithin{equation}{section}

\newtheorem{corollary}[equation]{Corollary}
\newtheorem{lemma}[equation]{Lemma}
\newtheorem{conjecture}{Conjecture}

\newtheorem*{theorem*}{Theorem}

\newreptheorem{theorem}{Theorem}
\newreptheorem{conjecture}{Conjecture}
\newreptheorem{lemma}{Lemma}
\newreptheorem{proposition}{Proposition}
\newreptheorem{corollary}{Corollary}

\newtheorem{thmx}{Theorem}

\newtheorem{corx}[thmx]{Corollary}

\crefname{thmx}{Theorem}{Theorems}

\theoremstyle{definition}
\newtheorem{definition}[equation]{Definition}
\newtheorem{question}[equation]{Question}
\newtheorem{example}[equation]{Example}

\theoremstyle{remark}
\newtheorem{remark}[equation]{Remark}

\newtheorem*{remark*}{Remark}

\AddToHook{env/theorem/begin}{\crefalias{equation}{theorem}}
\AddToHook{env/thmx/begin}{\crefalias{equation}{thmx}}
\AddToHook{env/corx/begin}{\crefalias{equation}{corx}}
\AddToHook{env/assumption/begin}{\crefalias{equation}{assumption}}
\AddToHook{env/hypothesis/begin}{\crefalias{equation}{hypothesis}}
\AddToHook{env/corollary/begin}{\crefalias{equation}{corollary}}
\AddToHook{env/definition/begin}{\crefalias{equation}{definition}}
\AddToHook{env/lemma/begin}{\crefalias{equation}{lemma}}
\AddToHook{env/question/begin}{\crefalias{equation}{question}}
\AddToHook{env/example/begin}{\crefalias{equation}{example}}
\AddToHook{env/conjecture/begin}{\crefalias{equation}{conjecture}}
\AddToHook{env/remark/begin}{\crefalias{equation}{remark}}
\AddToHook{env/const/begin}{\crefalias{equation}{const}}

\crefname{assumption}{Assumption}{Assumptions}
\crefname{hypothesis}{Hypothesis}{Hypotheses}
\crefname{theorem}{Theorem}{Theorems}
\crefname{thmx}{Theorem}{Theorems}
\crefname{proposition}{Proposition}{Propositions}
\crefname{corollary}{Corollary}{Corollaries}
\crefname{definition}{Definition}{Definitions}
\crefname{lemma}{Lemma}{Lemmas}
\crefname{question}{Question}{Questions}
\crefname{example}{Example}{Examples}
\crefname{conjecture}{Conjecture}{Conjectures}
\crefname{remark}{Remark}{Remarks}
\crefname{const}{Construction}{Constructions}

\numberwithin{equation}{section}

\newcommand{\bigast}{\mathop{\Huge \mathlarger{\mathlarger{\ast}}}}

\newcommand{\Q}{\mathbb{Q}}

\newcommand{\Z}{\mathbb{Z}}

\newcommand{\Id}{\operatorname{Id}}

\newcommand{\ol}{\overline}

\newcommand{\sm}{\setminus}

\newcommand{\ks}{\operatorname{ks}}

\DeclareMathOperator{\Wh}{Wh}

\title{Some remarks on $h$-cobordisms between smooth 4-manifolds}

\author{Alexander Kupers}
\address{Department of Computer and Mathematical Sciences, University of Toronto Scarborough, Ontario, Canada}
\email{a.kupers@utoronto.ca}

\author{Mark Powell}
\address{School of Mathematics and Statistics, University of Glasgow, United Kingdom}
\email{mark.powell@glasgow.ac.uk}

\begin{document}
	
\begin{abstract}
It is not known whether the realisation part of the $s$-cobordism theorem holds for smooth $4$-manifolds, nor whether every pair of smoothly $h$-cobordant 4-manifolds is also smoothly $s$-cobordant. We provide some new conditions under which these questions admit a positive answer. We also give conditions under which the `standard' method to construct an $h$-cobordism with specified torsion cannot work.
\end{abstract}

	\def\subjclassname{\textup{2020} Mathematics Subject Classification}
	\expandafter\let\csname subjclassname@1991\endcsname=\subjclassname
	\subjclass{
		57K40, 
		57R67, 
		19J10. 
	}

\maketitle
	
\vspace{-.5cm}
	
\section{Introduction}

A smooth cobordism $W \colon M \leadsto N$ between compact smooth $d$-manifolds (possibly with boundary) is an \emph{$h$-cobordism} if $\partial W = M \cup (\partial M \times I) \cup N$ and the inclusions $M \hookrightarrow W$ and $N \hookrightarrow W$ are homotopy equivalences; in this case we say that $M$ and $N$ are \emph{smoothly $h$-cobordant}. If moreover both inclusions are simple homotopy equivalences, then $W \colon M \leadsto N$  is an \emph{$s$-cobordism}, and we say $M$ and $N$ are \emph{smoothly $s$-cobordant}.

To each $h$-cobordism, throughout assumed to be connected and based, we can associate its \emph{torsion} $\tau(W,M) \in \Wh(\pi_1(M))$, taking values in the Whitehead group of the fundamental group~\cite[\S 2]{Whitehead-she}. By Milnor's duality formula~\cite[p.~394]{Mi66} for torsion, $\tau(W,N) =0$ if and only if $\tau(W,M)=0$, which by a result of Whitehead~\cite[\S 10]{Whitehead-she} in turn holds if and only if $W$ is an $s$-cobordism. In dimensions $d \geq 5$, the Whitehead torsion induces a bijection \cite{Kervaire}
\[\frac{\{\text{smooth $h$-cobordisms $W \colon M \leadsto N$}\}}{\text{diffeomorphisms fixing $M \cup (\partial M \times I)$ pointwise}} \overset{\cong}{\longrightarrow} \Wh(\pi_1(M)).\]
In dimension $d=4$, it is well-known that this map is not injective \cite{Donaldson}, and it is an open question whether it is surjective~\cite{K3}*{Problem 4.18}. Let us make the latter explicit.

\begin{question}\label{question-realisation}
	Given a connected, compact, smooth 4-manifold $M$ with fundamental group $\pi = \pi_1(M)$, can each torsion $\tau \in \Wh(\pi)$ be realised by a smooth $h$-cobordism?
\end{question}

The following related question was raised explicitly in \cite{KPR-table}*{Question~1.16} and \cite{K3}*{Problem 4.18}, and also remains open.

\begin{question}\label{question-h-implies-s}
Is every pair of  compact, smooth 4-manifolds that are smoothly $h$-cobordant also smoothly $s$-cobordant?
\end{question}

If the Whitehead group $\Wh(\pi)$ vanishes, for example if $\pi$ is torsion-free and satisfies the $K$-theoretic Farrell-Jones conjecture \cite{Lueck-FJ-survey}, then every $h$-cobordism is an $s$-cobordism; restricting to 4-manifolds with this fundamental group, both \cref{question-realisation} and \ref{question-h-implies-s} trivially have positive answers. Finite cyclic groups are the best-known examples of groups for which the Whitehead group can be nontrivial \cite{Mi66,Ol88}. Our first result is that both questions admit a positive answer when certain technical conditions are satisfied, e.g.~for $\pi$ finite cyclic.

Recall that a smooth $h$-cobordism $W \colon M \leadsto M'$ is \emph{inertial} if $M$ and $M'$ are diffeomorphic, $\mathrm{SK}_1(\pi) \coloneq \ker [K_1(\Z\pi) \to K_1(\Q\pi)]$, and $L_5^{s}(\Z\pi)$  and $L_5^h(\Z\pi)$ are the Wall surgery obstruction groups, with $s$- and $h$-decoration respectively.

\begin{thmx}\label{thm:main}
  Let $M$ be a smooth, compact, connected, oriented 4-manifold with finite fundamental group $\pi \coloneq \pi_1(M)$. Suppose that either
  \begin{enumerate}[(i)]
    \item\label{case-i} $\mathrm{SK}_1(\pi) = 0$ and the forgetful map $L_5^{s}(\Z\pi) \to L_5^h(\Z\pi)$ is injective;
    \item\label{case-ii} $\mathrm{SK}_1(\pi) = 0$ and $M$ is \emph{topologically pre-stabilised}, that is, homeomorphic to  $X \# (S^2 \times S^2)$ for some closed topological 4-manifold $X$; or
    \item\label{case-iii} $M$ is \emph{smoothly pre-stabilised}, that is, diffeomorphic to  $X \# (S^2 \times S^2)$ for some closed smooth 4-manifold $X$.
  \end{enumerate}
 Then,  for each $x \in \Wh(\pi)$ there exists an inertial smooth $h$-cobordism $W \colon M \leadsto M'$ with torsion $\tau(W,M) = x$.
\end{thmx}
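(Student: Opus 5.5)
The plan is to build $W$ as a union of $2$- and $3$-handles on $M\times I$, which is the only possible handle structure. Attaching $k$ trivial $2$-handles gives middle level $M\#k(S^2\times S^2)$. We then attach $k$ $3$-handles along embedded framed spheres $S_1,\dots,S_k$ whose equivariant intersection numbers with the belt spheres of the $2$-handles form a matrix $A\in GL_k(\Z\pi)$ representing $x$. Then $W$ is an $h$-cobordism with $\tau(W,M)=[A]=x$. The outgoing end $M'$ is obtained from $M\#k(S^2\times S^2)$ by surgery on the $S_i$.

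The content is twofold: the $S_i$ must be realised by smooth embeddings with the prescribed algebraic intersections, and $M'$ must be diffeomorphic to $M$. I would organise both through the surgery exact sequence for $M\times I$ rel $\partial$ together with the Ranicki--Rothenberg sequence
\[L_5^s(\Z\pi)\to L_5^h(\Z\pi)\to \widehat H^5(\Z/2;\Wh(\pi))\to L_4^s(\Z\pi)\to L^h_4(\Z\pi).\]

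\textbf{Case (iii).} I would argue via smooth Wall realisation. The $3$-handle attaching spheres can be taken to be the standard spheres $S^2\times\mathrm{pt}$ in the spare $S^2\times S^2$ summands, modified by the row operations encoded by $A$. Each elementary operation, including multiplication by a group element, is realised by tubing into parallel copies of geometrically dual spheres guided by arcs representing elements of $\pi$. Because the summand $S^2\times S^2$ of $X\#(S^2\times S^2)$ supplies a geometric dual, these tubings can be performed smoothly and disjointly. This is the standard way stabilisation removes the failure of the Whitney trick.

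Surgery on the resulting spheres returns a manifold which, by construction, is obtained from $X\#(S^2\times S^2)\#k(S^2\times S^2)$ by a diffeomorphism of the stabilised manifold (an ``elementary automorphism''), followed by surgering away $k$ hyperbolic pairs. So I expect $M'\cong M$ directly, giving inertiality. The torsion computation is routine from the cellular chain complex of $(W,M)$.

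\textbf{Cases (i) and (ii).} I would reduce to the algebraic input that for $\pi$ finite, $\mathrm{SK}_1(\pi)=0$ makes $\Wh(\pi)$ torsion-free with trivial standard involution, so $x^*=x$. This controls the Tate term above, and the obstruction to making $M'$ diffeomorphic to $M$ (rather than merely $h$-cobordant and homotopy equivalent) becomes an element in $L_5$.

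In case (i), I would first build $W$ topologically using Freedman's theory, since finite groups are good. I would then use the smooth stabilisation argument of (iii) on $M\#(S^2\times S^2)$. Finally I would use injectivity of $L_5^s\to L_5^h$ to show that the extra $S^2\times S^2$ can be cancelled: the difference element in $L_5^s$ that would obstruct destabilising the construction maps to zero in $L_5^h$, hence vanishes.

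In case (ii), topological pre-stabilisation gives $M\cong_{\mathrm{TOP}} X\#(S^2\times S^2)$. The same tubing argument then works with topologically embedded duals. One smooths the resulting $h$-cobordism rel $M$, with the smoothing obstruction living in $H^4(W,M;\Z/2)=0$. The trivial involution lets one check that the outgoing end is diffeomorphic to $M$.

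\textbf{Main obstacle.} I expect the main obstacle to be inertiality in cases (i) and (ii), that is, identifying $M'$ with $M$ up to diffeomorphism rather than just homotopy equivalence. My first attempt would be to show that the composite $M\simeq W\simeq M'$ lies in the kernel of the relevant map to $L_4$ and the normal invariants. I would then use the stated algebraic hypothesis to kill the remaining $L_5$-ambiguity.
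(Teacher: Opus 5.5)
There is a genuine gap in the core construction. Attaching $3$-handles along framed spheres $S_i$ whose intersection numbers with the belt spheres form $A$ only makes $N\hookrightarrow W$ a $\Z\pi$-homology equivalence. To get an $h$-cobordism you also need control of $\pi_1(N)$, for instance geometric duals for the $S_i$.

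Your case (iii) recipe does keep geometric duals, but it cannot produce nonzero torsion. Tubing one sphere into parallel copies of another member of the dual basis is a handle slide, i.e.\ an elementary matrix. Changing basing arcs or orientations multiplies by $\pm g$. These operations generate exactly the kernel of $GL(\Z\pi)\to\Wh(\pi)$. For $x\neq 0$, no representative $A$ is a product of such operations, even stably, so what you build is a product cobordism with torsion $0$.

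If instead you realise the columns of $A$ (or a nontrivial unit) directly by tubing together parallel copies of the $B_j$, that is precisely the standard construction of Section~3. \cref{theorem:obvious construction does not work} then shows that the outgoing end has $\pi_1\cong\pi(\mathcal{P})\not\cong\pi$ whenever $\dim(x)=3$, e.g.\ for Rothaus's unit in $\Z D_{10}$. The spare summand of $X\#(S^2\times S^2)$ does not help merely by existing. Disjointness of the tubes was never the issue; the missing duals are. Any nonzero torsion would have to come from a genuine use of that summand, for example a Whitehead-lemma argument realised by diffeomorphisms of the stabilised manifold, and your outline supplies nothing of the kind. So ``$M'\cong M$ by construction'' is unsupported.

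For (i) and (ii), starting topologically matches the paper, but the missing idea is control of the smooth structure on the outgoing end. Vanishing of an element of $L_5^s(\Z\pi)$ yields only topological conclusions here, via Freedman's theory for good groups. It does not let you smoothly cancel an $S^2\times S^2$, and smooth cancellation genuinely fails in dimension $4$. Smoothing a topological $h$-cobordism rel $M$ is indeed unobstructed, since $H^4(W,M;\Z/2)=0$. However, this only gives $N$ some smooth structure homeomorphic to $M$, and triviality of the involution says nothing about whether it is diffeomorphic to $M$.

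The paper proceeds in three steps.
\begin{enumerate}
\item Realise $x$ by a topological $h$-cobordism.
\item Prove $N\cong M$ topologically. In case (i), $\tau(N\to M)=x-\ol{x}=0$, so $W\to M\times I$ is a normal map whose simple surgery obstruction lies in $\ker(L_5^s\to L_5^h)=0$; this yields a topological $s$-cobordism. In cases (ii) and (iii), use stable homeomorphism plus Hambleton--Kreck cancellation.
\item Smooth $W$ rel \emph{both} ends. The obstruction $\ks(W,\partial W)\in H^4(W,\partial W;\Z/2)\cong H_1(W;\Z/2)$ is killed by reparametrising $N$ by a self-homeomorphism with prescribed Casson--Sullivan invariant, using Galvin's realisation theorem (\cref{lemma:realising-CS}).
\end{enumerate}
The last step is the second place where $\mathrm{SK}_1(\pi)=0$ or smooth pre-stabilisation is used. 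It is what makes the new end diffeomorphic to $M$, since its smooth structure is pulled back along a homeomorphism. Your proposal has no substitute for it.
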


A key ingredient in the proof is Galvin's realisation of the Casson--Sullivan invariant~\cite{Galvin-CS}.
Next, a positive answer to \cref{question-realisation} that produces inertial $h$-cobordisms, as in \cref{thm:main}, leads to a positive answer to \cref{question-h-implies-s}.

\begin{corx}\label{cor:h-implies-s}
If $M$ satisfies the hypotheses of \cref{thm:main}, then every 4-manifold that is smoothly $h$-cobordant to $M$ is also smoothly $s$-cobordant to $M$.
\end{corx}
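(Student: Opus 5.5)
Given a smooth $h$-cobordism $V \colon M \leadsto N$, the idea is to glue onto $V$ an inertial $h$-cobordism with the right torsion so that the composite has zero torsion. The new cobordism should end at $N$, so we build it on the $N$ side. The main inputs are the composition formula for Whitehead torsion and Milnor's duality formula.

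First compute the torsion of $V$ read from $N$. By duality, $\tau(V,N) = \pm\overline{\tau(V,M)}$ under the identification $\pi_1(M)\cong\pi_1(V)\cong\pi_1(N)$, where the bar is the standard involution on $\Wh(\pi)$. Set $y \coloneq \tau(V,N)$ and $x \coloneq -y$.

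Next, apply \cref{thm:main} to produce an inertial $h$-cobordism $U \colon M \leadsto M'$, with $M'\cong M$ and $\tau(U,M)$ chosen below. Since $M'$ is diffeomorphic to $M$, we may regard $U$ as a cobordism from $M$ to (a copy of) $M$. Glue $U$ to $V$ along $M$, using $U$ turned upside down: that is, $U$ is attached to $V$ along the end of $U$ diffeomorphic to $M$, keeping the copy $M'$ free. This gives $W \coloneq \overline{U}\cup_M V \colon M' \leadsto N$, and $M'\cong M$. We may also need to precompose identifications with the diffeomorphism $M\cong M'$, so we keep track of the induced automorphism of $\pi$. The cleaner route avoids this: choose $U$ with $\tau(U,M)=z$, and then $\tau(W,M')$ is computed as follows. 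By the sum formula for torsion (the torsion of a composite $h$-cobordism is $\tau(U',M') + \tau(V,M)$ pushed forward), we get $\tau(W,M') = \tau(\overline U, M') + \tau(V,M)$, where $\overline{U}$ denotes $U$ read from $M'$. By duality, $\tau(\overline U,M') = \pm \bar z$, transported along the identifications of $\pi_1$. Since the involution on $\Wh(\pi)$ is bijective, we can choose $z \in \Wh(\pi)$ so that $\pm\bar z = -\tau(V,M)$, and \cref{thm:main} realises this $z$. Then $\tau(W,M')=0$, so $W$ is an $s$-cobordism from $M'\cong M$ to $N$. Composing with the diffeomorphism $M\cong M'$ shows that $M$ and $N$ are smoothly $s$-cobordant.

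The main obstacle is bookkeeping rather than anything deep. The sum formula needs $\pi_1$ identifications that are consistent through the gluing, and these are automatic because all inclusions are $\pi_1$-isomorphisms. The remaining care is in the signs and orientation conventions in duality (the formula is $\tau(W,N)=(-1)^{d}\overline{\tau(W,M)}$ with $d=4$, so no sign issue in the even-dimensional case). One must also ensure the boundary condition $\partial M\times I$ glues compatibly when $\partial M\neq\emptyset$. Finally, $\pi$ should be taken based at a point on $M$; the statement ``smoothly $s$-cobordant'' does not depend on these choices.
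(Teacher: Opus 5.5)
Your argument is correct, but it runs the other way round from the paper's. The paper applies \cref{thm:main} to $N$, obtaining an inertial $W\colon N\leadsto N'$ with torsion $-\theta_*(\tau(V,M))$, and stacks it on top of $V$. The composition formula then gives zero torsion directly, with no duality or involution involved.

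You instead apply \cref{thm:main} to $M$ itself, turn the resulting $U\colon M\leadsto M'$ upside down, and use Milnor duality $\tau(U,M')=\overline{\tau(U,M)}$ (sign $+1$ since $U$ is $5$-dimensional) to choose $z=-\overline{\tau(V,M)}$. This costs some bookkeeping, and the involution really does have to be tracked, since it need not be trivial when $\mathrm{SK}_1(\pi)\neq 0$. You handle this correctly by using only that the involution is a bijection.

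What your route buys is that \cref{thm:main} is only ever invoked for the manifold assumed to satisfy its hypotheses. The paper's route tacitly requires $N$ to satisfy them. In cases \eqref{case-i} and \eqref{case-ii} this is harmless: those conditions depend only on $\pi$ and on the homeomorphism type, and $N$ is homeomorphic to $M$ by the argument of \cref{lemma:case-ii}. In case \eqref{case-iii} with $\mathrm{SK}_1(\pi)\neq 0$, however, it is not evident that a manifold smoothly $h$-cobordant to a smoothly pre-stabilised one is itself smoothly pre-stabilised. Your version sidesteps that point.

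The quantities $y$ and $x$ defined in your first paragraph are never used and can be dropped.
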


\begin{example}\,
	\begin{itemize}
		\item Case \eqref{case-i} applies if $\pi$ is a finite cyclic group $C_n$ of order $n$, since $\mathrm{SK}_1(\pi) = 0$ by \cite{Bass-Milnor-Serre}*{Proposition~4.14} and $L_5^s(\Z C_n) =0$ by \cite{Bak-computation-L-groups}.
		\item Case \eqref{case-ii} applies to many more examples of fundamental groups $\pi$, since $\mathrm{SK}_1(\pi)=0$ in for example the following cases:
		\begin{itemize}
			\item $\pi$ is abelian and either
			(a) each Sylow subgroup of $\pi$ has the form $C_{p^n}$ or  $C_p \times C_{p^n}$ for some $n$, or (b) $\pi = (C_2)^k$ for some $k$  \cite{Ol88}*{Theorem~14.2~(iii)},
			\item $\pi$ is a dihedral group $D_{2n}$ of order $2n$ \cite[Theorem 21.1]{MagurnThesis},
			\item $\pi$ is on the list of \cite[Theorem A]{Ushitaki}, e.g.~the binary icosahedral group.
		\end{itemize}
	\end{itemize}
\end{example}

\begin{remark}
	By the Ranicki--Rothenberg exact sequence of \cite{Shaneson-GxZ}*{Section~4}, the kernel of $L_5^{s}(\Z\pi) \to L_5^h(\Z\pi)$ is a quotient of the Tate cohomology group $\smash{\widehat{H}^6}(C_2;\Wh(\pi))$. Since the latter are 2-torsion, this kernel is trivial
 for example when $L_5^{s}(\Z\pi)$ is free abelian.
\end{remark}

We continue to consider \cref{question-realisation}, but now without any consideration of whether the realising $h$-cobordism is inertial. There is a `standard' method to try to construct a smooth $h$-cobordism $W \colon M \leadsto N$ with a given torsion $x$. We explain it in detail later, but roughly one represents $x$ by an $(n \times n)$-matrix $X$ with columns $X_i$, adds $n$ trivial 5-dimensional $2$-handles to $M \times I$, and then represents each $X_i$ by an embedded $2$-sphere by tubing together parallel copies of capped-off cores of the new $2$-handles. Attaching a 3-handle to each such embedded 2-sphere results in a cobordism $W \colon M \leadsto N$, where $M \hookrightarrow W$ is homotopy equivalence with $\tau(W,M) = [X] \in \Wh(\pi)$. But $N \hookrightarrow W$ is in general only a $\Z\pi$-homology isomorphism, as there is no reason a priori for the embedded 2-spheres representing the $X_i$ to admit immersed dual spheres and hence one loses control on the fundamental group $\pi_1(N)$.

Our second result says that whether this construction can work or not is closely related to an invariant of M.~Cohen~\cite{MCohen}. To each $x \in \Wh(\pi)$, he associates a number $\dim(x) \in \{0,2,3\}$, with $\dim(0)=0$ and $\dim(x) \in \{2,3\}$ for $x \neq 0$, and he conjectured that $x \neq 0$ $\Leftrightarrow$ $\dim(x) = 3$.

\begin{thmx}\label{theorem:obvious construction does not work}
  Let $M$ be a smooth, compact, connected, oriented 4-manifold with fundamental group $\pi \coloneq \pi_1(M)$. Then for $x \in \Wh(\pi)$ we have:
  \begin{enumerate}[(i)]
  	\item if $\dim(x) \in  \{0,2\}$, there exists an $h$-cobordism $W \colon M \leadsto N$ with $\tau(W,M) = x$;
  	\item if $\dim(x) = 3$, the standard construction \emph{cannot} yield any $h$-cobordism $W \colon M \leadsto N$ with $\tau(W,M) = x$.
  \end{enumerate}
\end{thmx}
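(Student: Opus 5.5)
Recall Cohen's invariant: $\dim(x)$ is the minimal dimension of a finite CW pair $(K,L)$ with $L\hookrightarrow K$ a homotopy equivalence, $L$ a $K(\pi,1)$-ish 2-complex (or any complex with $\pi_1=\pi$), and $\tau(K,L)=x$. Here "minimal" refers to the dimension of $K$ relative to $L$. Equivalently, $\dim(x)\le 2$ exactly when $x$ is realised by a relative 2-complex: attach $n$ trivial 1-cells and $n$ 2-cells along words forming a presentation matrix whose Fox-derivative (boundary) matrix over $\Z\pi$ represents $x$. In cellular terms, $\dim(x)\le 2$ means some matrix representing $x$ is the matrix of $\partial_2$ of a relative $2$-complex built from $1$- and $2$-cells.

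For part (i), take $x$ with $\dim(x)\le 2$, so $x$ is realised by $(K,L)$ with relative cells in dimensions $1$ and $2$ only. I would thicken this inside dimension five. Form $M\times I$ and attach, to $M\times\{1\}$, $n$ trivial $5$-dimensional $1$-handles. Then attach $n$ $2$-handles along framed circles in $M\#_n(S^1\times S^3)$ representing the words given by the relative 2-cells. Since the boundary is $4$-dimensional, homotopy implies isotopy for circles, so any homotopy class is realised and the attaching maps can be chosen to match the cellular ones. The result $W$ deformation retracts onto $M\cup$ cells, which is a copy of $(K,L)$ relative to $L$ after thickening. Hence $M\hookrightarrow W$ is a homotopy equivalence with torsion $x$.

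Dually, $W$ is built from $N$ by $3$- and $4$-handles, so $\pi_1(N)\to\pi_1(W)$ is an isomorphism by general position. Combined with Poincaré–Lefschetz duality, which gives $H_*(W,N;\Z\pi)=0$, and Whitehead's theorem, $N\hookrightarrow W$ is a homotopy equivalence. For $\dim(x)=0$, i.e.\ $x=0$, I would simply take the product.

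For part (ii) I would argue by contrapositive. Suppose the standard construction yields an $h$-cobordism: $2$-handles are attached trivially, then $3$-handles along spheres $S_i$, and $N\hookrightarrow W$ is a homotopy equivalence. Turn $W$ upside down: from $N$ one attaches dual $2$-handles, coming from the $3$-handles, and dual $3$-handles. The point is that the original $2$-handles are attached trivially, each along a null-homotopic circle in $M$. I would use the resulting handle structure to exhibit a relative CW pair of dimension $2$ with torsion $\pm x$ or $\bar x$. One approach is to trade each trivial $2$-handle for a $1$-handle: trivial $2$-handles in $M\times I$ are equivalent, up to adding $S^2\times D^3$ boundary-connected sums, to cancelling $1$/$2$ pairs, i.e.\ $M\times I\cup h^2\cong (M\times I\cup h^1\cup h^2)$. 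Alternatively, I would use duality directly: the dual decomposition from $N$ has only $2$- and $3$-handles, and the trivial attaching of the original $2$-handles means the dual $3$-handles are attached in a standard way. Together with the Cohen invariant's behaviour under duality, $\dim(x)=\dim(\pm\bar x)$, this would show that the relative complex $(W,N)$ collapses to dimension $2$.

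The cleanest route I would try is the following. The $h$-cobordism condition on $N$ forces the spheres $S_i$ to have immersed duals, i.e.\ $\pi_1$ is preserved. Then one can add $1$-handles algebraically: replace each trivial $2$-handle by a $1$-handle/$2$-handle pair, and each $3$-handle by the $2$-handle it becomes after a handle trade that uses those duals. This gives a complex of relative dimension $2$ realising $x$, so $\dim(x)\le2$. That contradicts $\dim(x)=3$.

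The main obstacle is this last step: converting the $3$-handles attached along the $S_i$ into $2$-cells while preserving torsion. I expect this to require the observation that the $S_i$, being tubed from caps of trivial handles, are homotopically spheres in $M\#_n(S^2\times S^2)$ whose homotopy classes lie in the span of the new $S^2$ factors. Consequently the whole relative complex deformation retracts, rel $M$, onto a wedge-type $2$-complex whose boundary matrix is $X$; that is precisely the statement that $\dim(x)\le 2$.
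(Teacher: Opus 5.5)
Your part (i) is correct, but it takes a different route from the paper. You thicken Cohen's relative $2$-complex directly. You attach $n$ five-dimensional $1$-handles, then $2$-handles along the relators $r_i\in\pi*F$ of a trivial admissible presentation. Then $(W,N)$ has only $3$- and $4$-handles, so $\pi_1(N)\cong\pi_1(W)$ is automatic, and duality plus Whitehead's theorem finishes. The paper proves (i) with the standard construction itself, namely trivial $2$-handles plus $3$-handles along tubed spheres. That costs a fundamental group computation, but it shows that this particular construction succeeds whenever the tubing presentation is trivial, and the same computation is then the whole proof of (ii).

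Part (ii) has a genuine gap. Your contrapositive framing is right, but the step you call the main obstacle is the entire content, and neither mechanism you propose can supply it.
\begin{itemize}
\item The homotopy-class mechanism proves too much. Rel $M$, the pair $(W,M)$ is simple homotopy equivalent to $M\vee\bigvee^n S^2\cup\bigcup^n e^3$, with the $3$-cells attached along the classes of the $C_i$ in $\pi_2$. These classes are determined by the matrix $X(\mathcal P)$ alone. So they are the same for every tubing with that matrix, whether or not $N\hookrightarrow W$ is a homotopy equivalence. An argument passing from ``the $S_i$ lie homotopically in the span of the new $S^2$ factors'' to ``$(W,M)$ reduces rel $M$ to a relative $2$-complex with boundary matrix $X$'' never uses the $h$-cobordism hypothesis. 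It would therefore show $\dim(x)\le 2$ for every $x$, contradicting \cref{exam:rothaus}.
\item The handle-trading mechanism cannot be a legitimate move either. By the paper's closing remark, every $5$-dimensional $h$-cobordism has a decomposition into trivially attached $2$-handles and $3$-handles with $\pi_1(N)\cong\pi_1(W)$. By \cref{exam:introduction}, there are smooth such $h$-cobordisms whose torsion has $\dim(x)=3$. So any correct proof must use the specific tubed shape of the attaching spheres, not just their homotopy classes and $\pi_1$-preservation.
\item A trivial $2$-handle is not a cancelling $1$/$2$ pair: $M\times I\cup h^2\cong(M\times I)\natural(S^2\times D^3)$, whereas a cancelling pair gives back $M\times I$.
\item Your duality route would also need $\dim(x)=\dim(\pm\ol{x})$, which you do not justify.
\end{itemize}

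The missing idea is to compute $\pi_1(N)$ directly. The tubing data is precisely an admissible presentation $\mathcal P$ with $X(\mathcal P)=X$: parallel copies $B(i,k)$ of $B_i$, each tubed to $B_{j_{i,k}}$ along a path determined by $g(i,k)\in\pi$, with orientation $\varepsilon(i,k)$. By Seifert--van Kampen, the complement of the parallel copies has fundamental group $\pi*\bigast_i\langle x(i,1),\dots,x(i,n(i))\mid\prod_k x(i,k)\rangle$. Each tube imposes the relation $x(i,k)=g(i,k)x_{j_{i,k}}^{\varepsilon(i,k)}g(i,k)^{-1}$. Eliminating generators leaves $(\pi*F)/(r_1,\dots,r_n)=\pi(\mathcal P)$, and gluing in the $D^3\times S^1$ pieces does not change $\pi_1$. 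Under these identifications, $\pi_1(N)\to\pi_1(W)\cong\pi$ is the projection killing the $x_i$. So if $W$ is an $h$-cobordism with $\tau(W,M)=x$, then $\mathcal P$ is a trivial admissible presentation with $\tau(\mathcal P)=x$, i.e.\ $\dim(x)\le 2$. No handle trading is needed.
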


\begin{example}\label{exam:introduction}
Rothaus and Magurn found examples of $x \in \Wh(D_{2n})$ with $\dim(x) =3$ \cite{RothausBull,Rothaus,Magurn}; see \cref{exam:rothaus} for more details. By \cref{theorem:obvious construction does not work} one cannot realise these torsions using the standard construction. \cref{thm:main} \eqref{case-ii}, however, does realise these torsions for topologically pre-stabilised manifolds, as does \eqref{case-iii} for smoothly pre-stabilised 4-manifolds. Since our proof of \cref{thm:main} relies on surgery theory and smoothing theory, it is non-constructive. We hope 4-manifold topologists are encouraged to find explicit constructions.
\end{example}

\subsubsection*{Acknowledgements}
The authors would like to thank Ian Hambleton and Daniel Kasprowski for several helpful remarks, including the suggestion to add case \eqref{case-iii} of \cref{thm:main}. The second author thanks Michelle Daher, Daniel Galvin, Csaba Nagy, John Nicholson, and Arunima Ray for stimulating conversations on these and related questions.  The authors are grateful to the organisers of the Zeeman Centenary conference at the University of Warwick in December 2025, at which some of this work was done. AK acknowledges the support of the Natural Sciences and Engineering Research Council of Canada (NSERC) [funding reference number 512156 and 512250].

\tableofcontents

\section{Proof of Theorem~\ref{thm:main} and Corollary~\ref{cor:h-implies-s}}\label{section:proof of theorem-main}

Recall that $M$ is a smooth, compact, connected, oriented $4$-manifold with finite fundamental group $\pi \coloneq \pi_1(M)$. Since $\pi$ is finite, it is a good group in the sense of \cite{FQ}, and hence by the topological realisation of Whitehead torsion by $h$-cobordisms (e.g.~\cite{KPR-table}*{Theorem~3.5}), for every $x \in \Wh(\pi)$ there exists a topological $h$-cobordism $W \colon M \leadsto N$ with $\tau(W,M) =x$. We want to show that $M$ and $N$ are homeomorphic. To do this, we deal with case~\eqref{case-i} of \cref{thm:main} first, in \cref{lemma:case-i}, and then we show it for cases~\eqref{case-ii} and \eqref{case-iii} simultaneously, in \cref{lemma:case-ii}. After that we complete the proof of \cref{thm:main} in unison for all three cases.


\begin{lemma}\label{lemma:case-i}
 In case~\eqref{case-i}, $M$ and $N$ are homeomorphic.
\end{lemma}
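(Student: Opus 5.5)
The plan is to show that the topological $h$-cobordism $W \colon M \leadsto N$ can be replaced, by topological surgery rel boundary, with a topological $s$-cobordism between $M$ and $N$. The topological $s$-cobordism theorem for good fundamental groups \cite{FQ} then gives a homeomorphism $M \cong N$. The two hypotheses of case~\eqref{case-i} enter in two separate places: $\mathrm{SK}_1(\pi)=0$ makes the boundary homotopy equivalence simple, and injectivity of $L_5^s(\Z\pi)\to L_5^h(\Z\pi)$ kills the resulting surgery obstruction.

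\emph{Step 1: the homotopy equivalence $N \to M$ is simple.} Let $r \colon W \to M$ be a homotopy inverse to the inclusion $M \hookrightarrow W$, and set $f \coloneq r|_N \colon N \to M$. Composition of torsions gives
\[\tau(f) = \tau(W,N) - \tau(W,M),\]
up to a sign convention that plays no role. Milnor's duality formula in dimension $5$, with $M$ oriented so that the orientation character is trivial, gives $\tau(W,N) = \overline{\tau(W,M)} = \bar x$, where the bar is the standard involution. For finite $\pi$, Wall's theorem says the standard involution acts trivially on $\Wh(\pi)/\mathrm{SK}_1(\pi)$. Since $\mathrm{SK}_1(\pi)=0$, this yields $\bar x = x$, and hence $\tau(f)=0$. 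This is the step I expect to need the most care: I must make sure the version of Wall's result I cite applies to the full Whitehead group once $\mathrm{SK}_1$ vanishes, and that the signs in the duality formula are right for odd-dimensional $W$ with trivial orientation character.

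\emph{Step 2: view $W$ as a degree one normal map rel boundary.} Extend $r$ to a degree one normal map $F \colon W \to M \times I$, which restricts to the identity on $M$ and to $f$ on $N$. The normal data come from pulling back the stable normal bundle of $M$ along the homotopy equivalence $r$. On the boundary $F$ is a simple homotopy equivalence by Step~1. Therefore $F$ has a rel-boundary surgery obstruction $\sigma^s(F) \in L_5^s(\Z\pi)$.

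\emph{Step 3: kill the obstruction and finish.} The image of $\sigma^s(F)$ under the forgetful map $L_5^s(\Z\pi) \to L_5^h(\Z\pi)$ is the $h$-obstruction $\sigma^h(F)$. This obstruction vanishes, since $F$ is already a homotopy equivalence. By injectivity, $\sigma^s(F)=0$. Because $\pi$ is finite, hence good, topological surgery in dimension $5$ rel boundary works \cite{FQ}. It produces a normal bordism rel boundary from $F$ to a simple homotopy equivalence $F' \colon W' \to M\times I$ with $\partial W' = M \cup (\partial M\times I) \cup N$. Then $W'$ is a topological $s$-cobordism from $M$ to $N$, and the topological $s$-cobordism theorem for good groups gives the homeomorphism $M \cong N$.
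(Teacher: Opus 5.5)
Your proposal is correct and follows the paper's proof essentially step for step. It shows $f$ is simple because $\tau(f)=\pm(x-\bar x)$ and the involution is trivial on $\Wh(\pi)/\mathrm{SK}_1(\pi)$. It then places the rel-boundary surgery obstruction of $F$ in $\ker(L_5^s(\Z\pi)\to L_5^h(\Z\pi))$, and finishes with surgery to an $s$-cobordism and the topological $s$-cobordism theorem for good groups. Two points you left implicit: as in the paper, $r$ should be chosen as a retraction ($r|_M=\Id_M$, using the collar), and your normal data $\mathrm{pr}^*\nu_M$ agrees with the paper's $G^*\nu_W$ because $\nu_W\cong r^*\nu_M$, which follows from $\mathrm{inc}_M\circ r\simeq \Id_W$.
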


\begin{proof}
Let $g \colon W \to M$ be a homotopy inverse to the inclusion $\mathrm{inc}_M \colon M \to W$. By using the collar on $M$ we may assume that $g$ is a weak deformation retract, which in particular means that $g \circ \mathrm{inc}_M = \Id_M$.
Let $f \coloneq g \circ \mathrm{inc}_N \colon N \to M$ be the homotopy equivalence $f \colon N \to M$ induced by $W$. This has Whitehead torsion $\tau(f) = x- \ol{x}$ (e.g~\cite{NNP-he-vs-she}*{Proposition~2.38}), where $x \mapsto \ol{x}$ is the involution on the Whitehead group.
The involution acts trivially on the quotient of $\Wh(\pi)$ by $\mathrm{SK}_1(\pi)$ \cite{Ol88}*{Corollary~7.5}, so using the hypothesis that $\mathrm{SK}_1(\pi) = 0$, the involution acts trivially on $\Wh(\pi)$, and thus $\tau(f) = x-\ol{x} = x-x =0$, i.e.\ $f$ is a simple homotopy equivalence.

Take the product of $g$ with a continuous map $h \colon W \to I$ satisfying $h^{-1}(\{0\}) =M$ and $h^{-1}(\{1\}) = N$, to obtain a homotopy equivalence of pairs $(F,\Id_M \sqcup f) \colon (W,M \cup N) \to (M \times I,M \cup M)$. This can be considered as a degree one normal map over $M \times I$ by adding normal data as follows: take $G \colon M \times I \to W$ to be a homotopy inverse to $F$, and consider the stable vector bundle data
\[
\begin{tikzcd}
  \nu_W \ar[r] \ar[d] & G^*\nu_W \ar[d] \\[-5pt] W \ar[r,"F"] & M \times I,
\end{tikzcd}
\]
where $\nu_W$ is the stable normal bundle of $W$. (It may be helpful to observe this is similar to the construction of the map $\eta \colon \mathcal{S}^s(M \times I,\partial) \to \mathcal{N}(M \times I,\partial)$ in the surgery exact sequence.)

Since the restrictions of this degree one map to the ends are given by $f\colon N \to M$ and $\Id_M \colon M \to M$, both of which are simple homotopy equivalences, the simple surgery obstruction~$\sigma_s(W,F)$ of $(W,F)$ represents an element in the surgery obstruction group~$L_5^s(\Z\pi)$. Since $F$ is a homotopy equivalence, $(W,F)$ has trivial surgery obstruction in~$L_5^h(\Z\pi)$, and hence~$\sigma_s(W,F)$ lies in the kernel of $L_5^s(\Z\pi) \to L_5^h(\Z\pi)$. As this kernel is trivial by hypothesis~\eqref{case-i}, $\sigma_s(W,F)=0$, and so~$(W,F)$ is normally bordant relative to the boundary to a simple homotopy equivalence~\cite{Wall-SCM}*{Chapter~6}; the resulting bordism is an $s$-cobordism.  Since $\pi$ is finite, and finite groups are good, by the $s$-cobordism theorem~\cite{FQ}*{Chapter~7} it follows that $M$ and $N$ are homeomorphic.
\end{proof}


\begin{lemma}\label{lemma:case-ii}
In cases \eqref{case-ii} and \eqref{case-iii}, $M$ and $N$ are homeomorphic.
\end{lemma}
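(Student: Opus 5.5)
Again let $f\colon N\to M$ be the homotopy equivalence induced by $W$, and let $F\colon (W, M\cup N)\to (M\times I, M\cup M)$ be the degree one normal map constructed in the proof of \cref{lemma:case-i}. The plan is to show directly that $N$ is homeomorphic to $M$, using the $S^2\times S^2$ summand of $M$, instead of appealing to injectivity of $L_5^s\to L_5^h$.

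\textbf{Step 1: reduce to a simple homotopy equivalence.} In case \eqref{case-ii}, $\mathrm{SK}_1(\pi)=0$, so exactly as in \cref{lemma:case-i} we get $\tau(f)=x-\ol{x}=0$. Hence $f$ is simple and defines an element of the simple structure set $\mathcal{S}^s(M)$.

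In case \eqref{case-iii} there is no such hypothesis, so $f$ need not be simple. I would try to fix this as follows. The torsion $x-\ol{x}$ is an element $y$ with $y=-\ol{y}$. I would use the fact that $M$ is smoothly stabilised to precompose with a self-homotopy equivalence of $M$ with torsion $-(x-\ol{x})$. Alternatively, I would change the $h$-cobordism $W$ by a topological $h$-cobordism from $N$ to itself whose torsion corrects the error. I expect this to need some input about realising torsions by self-equivalences supported on the $S^2\times S^2$ summand.

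\textbf{Step 2: make the normal invariant trivial.} Having a simple homotopy equivalence $f\colon N\to M$, I would use the topological surgery exact sequence for $M$, which is valid since $\pi$ is finite and hence good. Its normal invariant $\eta(f)\in\mathcal{N}(M)=[M,G/\mathrm{TOP}]$ is determined by the normal map $F$ restricted to the ends. Since $f$ is $h$-cobordant to $\Id_M$ through the normal map $F$, the normal invariant of $f$ equals that of $\Id_M$, which is trivial. So $f$ lies in the image of the Wall action of $L_5^s(\Z\pi)$ on $\mathcal{S}^s(M)$.

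\textbf{Step 3: kill the $L_5^s$ action, which is the key step.} I would use the known fact, going back to Freedman--Quinn and used in \cite{KPR-table}, that when $M\cong X\# (S^2\times S^2)$ every element of $L_5^s(\Z\pi)$ acts trivially on $\mathcal{S}^s(M)$ up to homeomorphism. The reason is that such elements are realised by normal bordisms built on the $S^2\times S^2$ summand, and these end in self-homeomorphisms. It follows that $N$ is homeomorphic to $M$, which proves both cases once Step 1 is in place.

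I expect the main obstacle to be Step 1 in case \eqref{case-iii}, where $\mathrm{SK}_1(\pi)$ may be nonzero. The first thing I would try there is to use the smooth structure and the Casson--Sullivan realisation result of Galvin mentioned in the introduction. Failing that, I would try to realise the torsion $\ol{x}-x$ by a self-homotopy equivalence of $X\#(S^2\times S^2)$ acting on the $\pi_2$ summand.
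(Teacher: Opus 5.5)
Your proposal has two genuine gaps, and the paper's much shorter argument avoids both.

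\textbf{Step~1 in case~\eqref{case-iii}.} Here Step~1 is never carried out. Without $\mathrm{SK}_1(\pi)=0$ the torsion $\tau(f)=x-\ol{x}$ need not vanish. You would need a self-homotopy equivalence of $M$ (or a correcting $h$-cobordism $N\leadsto N$) with prescribed torsion, and nothing you cite provides one. Galvin's Casson--Sullivan realisation cannot help: it produces self-\emph{homeomorphisms}, which are simple, so composing $f$ with them leaves $\tau(f)$ unchanged. Nor can you pass to $\mathcal{S}^h(M)$. There $[f]=[\Id_M]$ already, via $W$ itself, so nothing is learned about the homeomorphism type of $N$.

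\textbf{Step~3.} This step carries the entire content of the lemma. The ``known fact'' you invoke is not a result you can quote, and your heuristic does not prove it. For a Wall realisation of $\sigma\in L_5^s(\Z\pi)$ to end in a self-homeomorphism, you would have to realise the corresponding unitary automorphism of $H(\Z\pi^r)$ by a homeomorphism of $M\# r(S^2\times S^2)$. But the automorphisms one can realise in general, such as products of elementary transvections, represent zero in $L_5^s(\Z\pi)$. So the heuristic says nothing about the nonzero elements, which are the only ones that matter. For finite $\pi$ the statement is in fact true, but the way to prove it is via Kreck's stable classification plus Hambleton--Kreck cancellation. Once you have those, Steps~1 and~2 are superfluous.

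\textbf{The missing idea} is to bypass surgery on $f$ altogether.
\begin{enumerate}
\item An $h$-cobordism between 4-manifolds can be handle-traded to have only 2- and 3-handles. Its middle level then gives a homeomorphism $M\# k(S^2\times S^2)\cong N\# k(S^2\times S^2)$, so $M$ and $N$ are stably homeomorphic.
\item Since $\pi$ is finite, $M$ and $N$ are closed, and $M\cong X\#(S^2\times S^2)$, the cancellation theorem \cite{Hambleton-Kreck-93-II}*{Theorem~B} gives $M\cong N$ directly.
\end{enumerate}
This is the paper's proof. It needs neither $\mathrm{SK}_1(\pi)=0$, nor simplicity of $f$, nor the $L_5$-action, which is why it treats cases~\eqref{case-ii} and~\eqref{case-iii} uniformly.
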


\begin{proof}
As $M$ and $N$ are $h$-cobordant, they are stably homeomorphic~\cite{Lawson-decomposing}. Since \eqref{case-ii} and \eqref{case-iii} require there is a homeomorphism $M \cong X \# (S^2 \times S^2)$, the fundamental group is finite, and $M$ as well as $N$ are closed, \cite{Hambleton-Kreck-93-II}*{Theorem~B} implies that $M$ and $N$ are homeomorphic.
\end{proof}

We have now shown that $M$ and $N$ are homeomorphic in all three cases. We record the following conclusion.

\begin{corollary}\label{cor:existence-of-top-inertial}
 Let $M$ be a smooth, compact, connected, oriented $4$-manifold with finite fundamental group $\pi \coloneq \pi_1(M)$, and suppose at least one of the assumptions  \eqref{case-i}, \eqref{case-ii}, or \eqref{case-iii} holds. For every $x \in \Wh(\pi)$, there exists a topological inertial $h$-cobordism $W \colon M \leadsto M$ with $\tau(W,M) =x$.
\end{corollary}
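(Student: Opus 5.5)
The plan is to combine the topological realisation of torsion with \cref{lemma:case-i} and \cref{lemma:case-ii}, and then re-identify the far end of the cobordism with $M$. Fix $x \in \Wh(\pi)$. Since $\pi$ is finite, it is good. By the topological realisation result quoted at the start of this section (\cite{KPR-table}*{Theorem~3.5}), there is a topological $h$-cobordism $W \colon M \leadsto N$ with $\tau(W,M)=x$. Under assumption \eqref{case-i}, \cref{lemma:case-i} supplies a homeomorphism $\phi \colon N \to M$; under \eqref{case-ii} or \eqref{case-iii}, \cref{lemma:case-ii} does.

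Next I will glue. Form $W' \coloneq W \cup_{\phi^{-1}} (M \times I)$, attaching $M \times \{0\}$ to $N$ via $\phi^{-1}$. Equivalently, and more simply, just relabel the outgoing boundary of $W$ using $\phi$. Then $W'$ is a topological cobordism from $M$ to $M$, with the same underlying space as $W$ up to a collar. The inclusion $M \hookrightarrow W'$ is the original inclusion $M\hookrightarrow W$, so $\tau(W',M)=\tau(W,M)=x$. The inclusion of the far end is $\mathrm{inc}_N\circ\phi^{-1}$, a homotopy equivalence, so $W'$ is an $h$-cobordism. Its two ends are homeomorphic, indeed equal to $M$, so it is inertial in the topological sense.

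The only point needing care is boundary bookkeeping. When $\partial M \neq \emptyset$, the ends must be matched along $\partial M \times I$, which requires the homeomorphism $N \cong M$ to restrict to the identity-type identification on the boundary. In \cref{lemma:case-i} the $s$-cobordism theorem is applied relative to the boundary, so the resulting homeomorphism is rel $\partial$. In cases \eqref{case-ii} and \eqref{case-iii} the manifolds are closed. I therefore expect no real obstacle: the corollary is a direct repackaging of the two lemmas together with the topological realisation theorem.
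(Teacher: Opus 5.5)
Your proposal is correct and follows the paper's argument: realise $x$ by a topological $h$-cobordism $W \colon M \leadsto N$ using the good-group realisation theorem, then apply \cref{lemma:case-i} or \cref{lemma:case-ii} to get $N \cong M$. The relabelling of the outgoing end and the rel-$\partial$ bookkeeping are harmless but not needed, since being inertial only requires the two ends to be homeomorphic.
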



We will need the following in the proof of the next lemma.
Given a self-homeomorphism $\varphi \colon M \to M$ that is the identity on the boundary, let $\mathrm{C}_{\varphi}$ denote the mapping cylinder of $\varphi$,  with the evident smooth structure on the boundary.
The \emph{Casson--Sullivan invariant} $\mathrm{cs}({\varphi})$  of $\varphi$ is defined as the image of the Kirby-Siebenmann invariant $\ks(\mathrm{C}_{\varphi},\partial \mathrm{C}_{\varphi})$ under the isomorphisms
\[H^4(\mathrm{C}_{\varphi},\partial \mathrm{C}_{\varphi};\Z/2) \underset{\cong}{\xrightarrow{\mathrm{PD}}} H_1(\mathrm{C}_{\varphi};\Z/2) \underset{\cong}{\xleftarrow{\mathrm{inc}_*}} H_1(M;\Z/2),\]
where $\mathrm{inc} \colon M \to \mathrm{C}_{\varphi}$ is the inclusion into the mapping cylinder as the domain.
In our cases, Galvin showed that it is possible to realise each element of $H_1(M;\Z/2)$ as  $\mathrm{cs}({\varphi})$, for some $\varphi$.
For cases \eqref{case-i} and \eqref{case-ii}, apply \cite{Galvin-CS}*{Theorem~1.7}, using that the \emph{Casson--Sullivan realisability criterion} of \cite{Galvin-CS}*{Definition 4.11} is satisfied whenever $\mathrm{SK}_1(\pi)=0$, by \cite{Galvin-CS}*{Proposition~4.13}. For case \eqref{case-iii} we instead use \cite{Galvin-CS}*{Theorem~1.1}, which applies to smoothly pre-stabilised 4-manifolds.

\begin{lemma}\label{lemma:realising-CS}
Let $M$ and $N$ be compact, smooth 4-manifolds with fundamental group $\pi$, and let $(W;M,N)$ be a topological $h$-cobordism. Suppose that either $\pi$ is finite and $\mathrm{SK}_1(\pi)=0$, or $N$ is smoothly pre-stabilised. Then the given smooth structure on $M$, and some smooth structure on~$N$ diffeomorphic to the given one, extend to a smooth structure on $W$.
\end{lemma}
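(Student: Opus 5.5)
The plan is to use smoothing theory for the 5-manifold $W$ relative to its boundary, and to kill the single obstruction by changing the smooth structure on $N$ by a homeomorphism with prescribed Casson--Sullivan invariant.

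\textbf{Step 1: the obstruction.} First smooth the side boundary: $\partial M \times I$ carries the product smooth structure, and the smooth structures on $M$ and $N$ agree along $\partial M\times\{0,1\}$. Since $\dim W = 5$, smoothing theory (Kirby--Siebenmann) says that a smooth structure on $\partial W$ extends over $W$ if and only if the relative Kirby--Siebenmann invariant vanishes,
\[
\ks(W,\partial W) = 0 \in H^4(W,\partial W;\Z/2).
\]
There is no further obstruction, because $\mathrm{TOP}/\mathrm{O}$ has $\pi_i(\mathrm{TOP}/\mathrm{O}) = 0$ for $i \le 6$ apart from $i = 3$, where it is $\Z/2$. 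By Poincar\'e--Lefschetz duality and the fact that $M \hookrightarrow W$ is a homotopy equivalence,
\[
H^4(W,\partial W;\Z/2) \cong H_1(W;\Z/2) \cong H_1(M;\Z/2).
\]
So the obstruction is a single class $k \in H_1(M;\Z/2)$.

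\textbf{Step 2: modify the smooth structure on $N$.} Choose a self-homeomorphism $\varphi\colon N \to N$, fixing the boundary, and replace the smooth structure $\Sigma_N$ on $N$ by the pulled-back structure $\varphi^*\Sigma_N$. This new structure is diffeomorphic to the given one via $\varphi$. Equivalently, glue the mapping cylinder $\mathrm{C}_\varphi$ onto the $N$-end of $W$; the result $W' = W \cup_N \mathrm{C}_\varphi$ is again a topological $h$-cobordism, homeomorphic to $W$. By additivity of $\ks$ under gluing along the smooth manifold $N$,
\[
\ks(W',\partial W') = \ks(W,\partial W) + \mathrm{cs}(\varphi),
\]
where both terms are transported to $H_1(M;\Z/2)$ through the isomorphisms $H_1(N;\Z/2) \cong H_1(W;\Z/2) \cong H_1(M;\Z/2)$. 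It then suffices to find $\varphi$ with $\mathrm{cs}(\varphi) = k$. Galvin's theorems, as recalled just before this lemma, give this:
\begin{enumerate}[(a)]
\item \cite{Galvin-CS}*{Theorem~1.7} together with Proposition~4.13 applies when $\pi$ is finite and $\mathrm{SK}_1(\pi) = 0$;
\item \cite{Galvin-CS}*{Theorem~1.1} applies when $N$ is smoothly pre-stabilised.
\end{enumerate}
Both realisation results are applied to $N$, which is why the hypothesis of the lemma is placed on $N$.

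\textbf{Main obstacle.} The step needing the most care is Step 2's identification: that attaching the mapping cylinder changes the relative Kirby--Siebenmann invariant by exactly $\mathrm{cs}(\varphi)$, with compatible duality isomorphisms, including the sign and side conventions for which end the cylinder is glued to. I would check this by writing $\ks$ as the obstruction class for the relative smoothing and using naturality of Poincar\'e--Lefschetz duality for the decomposition $W' = W \cup_N \mathrm{C}_\varphi$. The formula is additive, so sign issues over $\Z/2$ disappear.

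Once $\ks(W',\partial W') = 0$, the smooth structure $\Sigma_M \cup (\text{product}) \cup \varphi^*\Sigma_N$ on $\partial W$ extends over $W \cong W'$, which proves the lemma.
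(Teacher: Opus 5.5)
Your proposal is correct and follows essentially the same route as the paper: you kill $\ks(W,\partial W)$ by gluing on the mapping cylinder of a self-homeomorphism of $N$, with its Casson--Sullivan invariant chosen via Galvin's realisation theorems. The additivity formula you flag as the main obstacle is exactly what the paper proves, using excision and naturality of the Kirby--Siebenmann class for the decomposition $W' = W \cup_N \mathrm{C}_\varphi$ relative to $\partial W' \cup N$.
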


\begin{proof}
Start with the given smooth structure on $\partial W = M \cup_{\partial M} N$.
The Kirby--Siebenmann obstruction to smoothing $W$ relative to its boundary is
\[\ks(W,\partial W) \in H^4(W;\partial W;\Z/2). \]
We claim that we can reparametrise $N$ by precomposing the embedding $N \to W$ with the inverse of a suitable homeomorphism ${\varphi} \colon N \to N$, to obtain an $h$-cobordism $W \colon M \leadsto N$ with a new smooth structure on its boundary, so that now the obstruction to smoothing $W$ relative to its boundary vanishes. Assuming this, since the dimension of $W$ is at least $5$, high-dimensional smoothing theory~\cite[Essay IV]{Kirby-Siebenmann:1977-1} implies that this smooth structure on $M \cup_{\partial M} N$ extends to a smooth structure on $W$.
In addition, the new smooth structure on $N$ arises by pulling back the original smooth structure along a homeomorphism, and so this homeomorphism provide the claimed diffeomorphism between the new and old smooth structures on $N$.

We use the Casson--Sullivan realisation described before the lemma, together with a suitable additivity formula for the Kirby--Siebenmann invariant, to prove the claim. Reparametrising using the inverse of the homeomorphism ${\varphi} \colon N \to N$ is equivalent to replacing $W$ with the union $W' \coloneq W \cup_N \mathrm{C}_{\varphi}$.
Consider the following commutative diagram of maps of pairs:
\[
\begin{tikzcd}
(W,\partial W) \ar[d,swap,"\kappa"] & (W',\partial W' \cup N)  \ar[dl,swap,"q"] \ar[dr,"r"] & (\mathrm{C}_{\varphi},\partial \mathrm{C}_{\varphi}) \ar[d,"\lambda"] \\
(W',\partial W' \cup \mathrm{C}_{\varphi}) & (W',\partial W') \ar[u,"p"] \ar[l,"i"] \ar[r,swap,"j"] & (W',\partial W' \cup W).
\end{tikzcd}
\]
The maps $\kappa$ and $\lambda$ induce isomorphisms on cohomology by excision, and the maps $i$ and $j$ induce isomorphisms on cohomology because $W$ and $\mathrm{C}_{\varphi}$ are $h$-cobordisms.
By naturality of the Kirby--Siebenman  invariant we have that
\begin{align}\label{eqn:ks-equalities}
  \ks(W',\partial W') & = p^* \ks(W',\partial W' \cup N);\;\;\;\; \ks(W,\partial W) = \kappa^* \ks(W', \partial W' \cup \mathrm{C}_{\varphi}); \text{ and } \\ \ks(\mathrm{C}_{\varphi},\partial \mathrm{C}_{\varphi}) &= \lambda^* \ks(W',\partial W' \cup W). \nonumber
\end{align}
We therefore have the following in $H^4(W',\partial W';\Z/2)$. Here the first and last equations use \eqref{eqn:ks-equalities},
the second equation uses naturality of obstruction theory, and the third uses that $i = q \circ p$ and $j= r \circ p$.
 \begin{align*}
\ks(W',\partial W') &= p^* \ks(W',\partial W' \cup N) \\  &= p^* \big(q^* \ks(W', \partial W' \cup \mathrm{C}_{\varphi}) + r^* \ks(W',\partial W' \cup W)\big) \\ &= i^* \ks(W', \partial W' \cup \mathrm{C}_{\varphi}) + j^* \ks(W',\partial W' \cup W) \\  &=
i^* (\kappa^*)^{-1} \ks(W,\partial W) + j^* (\lambda^*)^{-1} \ks(\mathrm{C}_\varphi,\partial \mathrm{C}_\varphi).
\end{align*}
The fact that $j^* (\lambda^*)^{-1}$ is surjective, together with the aforementioned realisation result for Casson--Sullivan invariants, means that we can choose $\varphi$ so that $j^*(\lambda^*)^{-1}\ks(\mathrm{C}_\varphi,\partial \mathrm{C}_\varphi) = i^* (\kappa^*)^{-1} \ks(W,\partial W)$, and hence $\ks(W',\partial W') = 0$. To see that we can choose $\varphi$ appropriately, we compare the images of the invariants under Poincar\'{e} duality, in $H_1(W';\Z/2)$ (a group that does not depend on $\varphi$), and note that by Galvin's Casson--Sullivan realisation~\cite{Galvin-CS}*{Theorems~1.1~and~1.7},  we can obtain any element of $H_1(N;\Z/2) \cong H_1(\mathrm{C}_\varphi;\Z/2) \cong  H_1(W';\Z/2)$.  Here Galvin's Casson--Sullivan realisation applies because $\pi$ is finite and $\mathrm{SK}_1(\pi)=0$, or $N$ is smoothly pre-stabilised, as discussed above the statement of the lemma.
\end{proof}

Apply \cref{lemma:realising-CS} to an inertial $h$-cobordism $(W;M,M)$ arising from \cref{cor:existence-of-top-inertial}.
Let $M'$ denote $M$ with the second smooth structure arising from the lemma. By the lemma, the smooth structure on $M \cup M'$ extends to one on $W$.
Thus $W \colon M \leadsto M'$ is the desired inertial smooth $h$-cobordism with Whitehead torsion $x \in \Wh(\pi)$.

\smallskip

To deduce Corollary~\ref{cor:h-implies-s}, start with a smooth $h$-cobordism $V \colon M \leadsto N$, and let $y \in \Wh(\pi_1(M))$ denote its Whitehead torsion. Let $\theta \colon \pi_1(M) \to \pi_1(N)$ denote the isomorphism  induced by~$V$.  Set $x=-\theta_*(y) \in \Wh(\pi_1(N))$, and apply \cref{thm:main} to obtain a smooth inertial $h$-cobordism $W \colon N \leadsto N'$ with torsion $-\theta_*(y)$. Then the concatenation $W \circ V \colon M \leadsto N'$ has torsion $y +\theta_*^{-1}(-\theta_*(y)) = 0 \in \Wh(\pi_1(M))$, and so is a smooth $s$-cobordism.  Then, since $N'$ is diffeomorphic to $N$, by glueing on $N \times I$ using the diffeomorphism we deduce that in fact $M$ and $N$ are smoothly $s$-cobordant, as required.

\section{Proof of \cref{theorem:obvious construction does not work}}\label{section:obv-construction-fails}

We explain how an attempt to replicate, in dimension $d=4$,  the `standard' high-dimensional construction of $h$-cobordisms with specified torsion, leads to a group-theoretic conjecture of M.~Cohen. This discussion goes through in all categories of manifolds---smooth, PL, and topological---so does not shed light on questions specifically concerning smooth manifolds.

\begin{remark}\label{rem:fq-error}
	Freedman and Quinn~\cite{FQ}*{Section~7.1, p.~102} assert: ``The standard construction of $h$-cobordisms (Rourke and Sanderson~\cite{RS82}*{p.~90}) works in this dimension, and shows that there is an $h$-cobordism with any given torsion.'' Combining \cref{theorem:obvious construction does not work} and \cref{exam:introduction}, this statement is not correct. This minor error was observed before in \cite[Lemma 5.8]{HausmannJahren}, which also explained how to use \cite{FQ} to fix it (though they, in turn, erroneously omit the hypothesis that the fundamental group must be good and assert the proof works for smooth manifolds). See the survey \cite{KPR-table}*{Theorem~3.5} for a complete proof of topological realisation of Whitehead torsion by $h$-cobordisms.
\end{remark}

\subsection{A conjecture of M.~Cohen}
We start by recalling definitions from \cite[\S 1]{MCohen}. Let $\mathcal{P}$ denote the data of a group $\pi$, a free group $F = \langle x_1,\ldots,x_n \rangle$ on $n$ generators, and relators  $r_1,\ldots,r_n$ that lie in the smallest normal subgroup $N$ of $\pi \ast F$ containing $x_1,\ldots,x_n$. Sending $x_i$ to $e$ induces the right surjection, split by the inclusion of $\pi$, in the short exact sequence
\[1 \longrightarrow \frac{N}{(r_1,\ldots,r_n)} \longrightarrow \pi(\mathcal{P}) := \frac{\pi * F}{(r_1,\ldots,r_n)} \longrightarrow \pi \longrightarrow 1.\]
Such an extension is said to be \emph{trivial} if the right map $\pi(\mathcal{P}) \to \pi$ is an isomorphism, and  \emph{proper} if it is not. The relations $r_i$ can, like every element in $N$, be written as a product
\[r_i = \prod_{k=1}^{n(i)} g(i,k) x(i,k)^{\varepsilon(i,k)} g(i,k)^{-1},\]
with $n(i) \geq 0$, $g(i,k) \in \pi$, $x(i,k) \in \{x_1,\ldots,x_n\}$, and $\varepsilon(i,k) \in \pm 1$. Define then an $(n \times n)$-matrix $X(\mathcal{P})$ with entries in $\mathbb{Z}\pi$ by
\[X(\mathcal{P})_{ij} \coloneq \sum_{x(i,k) = x_j} \varepsilon(i,k) g(i,k).\]
See \cite{MCohen}*{\S 4} for more details as well as an alternative description of $X(\mathcal{P})$ in terms of Fox derivatives. The following lemma is a straightforward exercise.

\begin{lemma}\label{lem:from-matrix-to-P}
  Given a finitely presented group $\pi$ and an $(n \times n)$-matrix $X$ with entries in $\mathbb{Z}\pi$, there exists $\mathcal{P}$ such that $X(\mathcal{P}) = X$.
\end{lemma}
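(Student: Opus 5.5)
The plan is to build the relators directly from the matrix entries, using the displayed formula for $X(\mathcal{P})_{ij}$ read backwards. First I write each entry as an integer combination of group elements, $X_{ij} = \sum_{g \in \pi} a_{ij}(g)\, g$, where $a_{ij}(g) \in \Z$ and only finitely many are nonzero. Fix the group $\pi$ and the free group $F = \langle x_1,\ldots,x_n\rangle$. For each $i$ I would define
\[
r_i \coloneq \prod_{j=1}^{n} \prod_{g \in \pi} \bigl(g\, x_j^{\operatorname{sgn}(a_{ij}(g))}\, g^{-1}\bigr)^{|a_{ij}(g)|}.
\]
Here each finite product is taken in any fixed order, and the factors with $a_{ij}(g)=0$ are omitted. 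Equivalently, $r_i$ is a product of $\sum_{j,g}|a_{ij}(g)|$ conjugates $g\,x_j^{\pm1}g^{-1}$, one for each unit of each coefficient.

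Each $r_i$ is a product of conjugates of the generators $x_j^{\pm1}$ by elements of $\pi$. So it lies in the normal closure $N$ of $\{x_1,\ldots,x_n\}$ in $\pi \ast F$, and the data $\mathcal{P} = (\pi, F, r_1,\ldots,r_n)$ is admissible. This expression is already in the form $r_i = \prod_k g(i,k)\,x(i,k)^{\varepsilon(i,k)}\,g(i,k)^{-1}$. Substituting it into the definition of $X(\mathcal{P})_{ij}$, the factors with $x(i,k) = x_j$ and $g(i,k) = g$ contribute $|a_{ij}(g)| \cdot \operatorname{sgn}(a_{ij}(g))\, g = a_{ij}(g)\, g$. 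Summing over $g$ gives $X(\mathcal{P})_{ij} = X_{ij}$.

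The only step needing care is well-definedness. As written, $X(\mathcal{P})$ depends on the chosen product decomposition of each $r_i$, whereas an element of $N$ admits many such decompositions. To cover this, I would invoke the Fox-derivative description referenced to \cite{MCohen}*{\S 4}: the entry $X(\mathcal{P})_{ij}$ is the image of $\partial r_i/\partial x_j$ under the map $\Z[\pi \ast F] \to \Z\pi$ sending each $x_k$ to $e$. This map depends only on $r_i$. A direct computation with the Fox product rule on the factors $g\,x_j^{\pm1}g^{-1}$, which have derivative $\pm g$ after the substitution $x_k \mapsto e$, recovers the same $a_{ij}(g)\,g$. If one takes the decomposition-based definition at face value instead, nothing further is needed, since we exhibited a decomposition. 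Finite presentability of $\pi$ plays no role beyond ensuring that $\pi(\mathcal{P})$ is finitely presented.
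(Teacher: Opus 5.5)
Your construction is correct and is the ``straightforward exercise'' the paper has in mind; the paper itself gives no proof. You take $r_i$ to be a product of $|a_{ij}(g)|$ copies of $g\,x_j^{\operatorname{sgn} a_{ij}(g)}g^{-1}$, read off $X(\mathcal{P})_{ij}=X_{ij}$, and your Fox-derivative remark correctly settles independence of the chosen decomposition. One small slip: in the paper's terminology ``admissible'' means $X(\mathcal{P})$ is invertible, so your $\mathcal{P}$ is admissible only when $X$ is. What you have actually verified is that each $r_i$ lies in $N$, i.e.\ that $\mathcal{P}$ is a legitimate datum.
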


Following M.~Cohen, we say that $\mathcal{P}$ is \emph{admissible} if $X(\mathcal{P})$ is invertible. We are interested specifically in admissible $\mathcal{P}$, because in such cases $X(\mathcal{P})$ represents an element
\[\tau(\mathcal{P}) = [X(\mathcal{P})] \in \Wh(\pi).\]
By \cite[Proposition 4.1]{MCohen}, if the extension is trivial, i.e.\ $\pi(\mathcal{P}) \xrightarrow{\cong} \pi$ is an isomorphism, $X(\mathcal{P})$ is invertible and thus the extension is admissible. There exist, however, also admissible proper extensions, as we will see in \cref{exam:rothaus}.

\begin{definition}For $x \in \Wh(\pi)$, define
	\[\dim(x) = \begin{cases} 0 & \text{if $x = 0$,} \\
		2 & \text{if $x \neq 0$ and there exists a trivial admissible presentation with $\tau(\mathcal{P}) = x$,} \\
		3 & \text{if $x \neq 0$ and every admissible presentation with $\tau(\mathcal{P}) = x$ is proper}.\end{cases}\]
\end{definition}

\begin{example}\label{exam:rothaus} For $D_{2n}$ the dihedral group with $2n$ elements, there exist elements $x \in \Wh(D_{2n})$ with $\dim(x)=3$ whenever $n$ does not divide $12$. This was proven by Rothaus for $n$ prime \cite[Theorem 11]{Rothaus} and in general by Magurn \cite[Corollary 13 and 15]{Magurn}. An explicit example appears in \cite[p.~285]{RothausBull}: writing $D_{10} = \langle g,h \mid g^5,h^2,hgh = g^{-1} \rangle$, the unit $(-1+g-g^2+g^3+g^4) + h(1-2g+g^2) \in \Z D_{10}$ represents an element $x \in \Wh(D_{10})$ with $\dim(x) = 3$.\end{example}

\begin{remark}
The work of Rothaus and Magurn amounts to the statement that if $x \in \Wh(G)$ maps to a non-zero element in the quotient $\mathrm{Ro}(G) \coloneq \Wh(G)/\smash{\Wh^+(G)}$, where $\smash{\Wh^+(G)} $ is a certain subgroup of ``positive elements'', then $\dim(x) = 3$. Magurn observed the quotient map $\Wh(G) \to \mathrm{Ro}(G)$ factors through  $\Wh'(G)/2\Wh'(G)$, where $\Wh'(G)$ is the torsion-free quotient of $\Wh(G)$ \cite[\S 3]{Magurn} and that if $\mathrm{Ro}(H) = 0$ for all hyperelementary subgroups $H \subset G$ then $\mathrm{Ro}(G) = 0$ as well \cite[Theorem 2 (b)]{Magurn}. It follows that this approach cannot prove that $\dim(x) = 3$ for a majority of non-zero elements of $\Wh(G)$.
\end{remark}

Notwithstanding, there are no known examples of $x$ with $\dim(x) = 2$, and M.~Cohen conjectured \cite[Conjecture A]{MCohen} the following.

\begin{conjecture}[M.~Cohen]
For every group $\pi$, $\dim(x) = 3$ if and only if $x \neq 0 \in \Wh(\pi)$.
\end{conjecture}

\subsection{The standard construction}
To begin the proof of \cref{theorem:obvious construction does not work}, let us recall what is arguably the `standard' method for constructing an $h$-cobordism with specified torsion; it is alluded to in \cref{rem:fq-error} and used in \cite[Part 2, \S 12]{HudsonPL} and \cite[p.~82]{RS82}. Let $M$ be a connected compact smooth 4-manifold with $\pi = \pi_1(M)$ and $x \in \Wh(\pi)$ be represented by $X \in \mathrm{GL}_n(\Z \pi)$. Our candidate $W \colon M \leadsto N$ will be a concatenation  $W \coloneq W_1 \circ W_0 \colon M \leadsto N$ of two cobordisms. The first cobordism is
\[W_0 = (M \times I) \natural (S^2 \times D^3)^{\natural n} \colon M \leadsto M \# n(S^2 \times S^2)\]
and the second cobordism $W_1 \colon M \# (S^2 \times S^2)^{\# n}\ \leadsto N$ we shall construct now. We have $H_2(M \# n(S^2 \times S^2);\Z\pi) \cong H_2(M;\Z\pi) \oplus (\Z\pi \oplus \Z\pi)^n$ where the first copy of $\Z\pi$ in the $i$th $\Z\pi \oplus \Z\pi$ summand is represented by the embedded 2-sphere $A_i = S^2 \times \{0\}$ and the second by $B_i = \{0\} \times S^2$, intersecting transversely in a single point. For each column $X_j$ of $X$, we want to construct a $2$-sphere $C_j$ with trivialised normal bundle whose collection of $\Z\pi$-equivariant intersection numbers with the $2$-spheres $A_i$ is given by the column $X_j$, by tubing together parallel copies of the $B_i$. If we do so, attaching 5-dimensional $3$-handles along the $C_j$ yields a second cobordism
\[W_1 \colon M \# n(S^2 \times S^2) \leadsto N,\]
so that $M \hookrightarrow W = W_1 \circ W_0$ is a homotopy equivalence with torsion $\tau(W,M)$ given by $x$. The difficulty, which does not arise in higher dimensions, is to guarantee that the inclusion $N \to W$ is also a homotopy equivalence; note that by Poincar\'{e}--Lefschetz duality and the universal coefficients theorem, it is always a $\Z\pi$-homology equivalence, so this is a question about the induced map on fundamental groups.

To investigate this difficulty, we need prescribe the tubing, and to do so we use an admissible presentation $\mathcal{P}$ with generators $x_1,\ldots,x_n$ and relators $r_1,\ldots,r_n$, satisfying $[X(\mathcal{P})] = [X] \in \Wh(\pi)$, which exists by \cref{lem:from-matrix-to-P}. Recall that we can write
\[r_i = \prod_{k=1}^{n(i)} g(i,k) x(i,k)^{\varepsilon(i,k)} g(i,k)^{-1},\]
where $n(i) \geq 1$ since $X$ is invertible.
We may and will assume---by conjugating, taking the inverse, and reordering, if necessary---that $x(i,1) = x_i$, $g(i,1) = e$, and $\varepsilon(i,1) = 1$, for each~$i$.
Here we use the following observations.
\begin{enumerate}
 \item Replacing $r_i$ by $\gamma r_i^{\delta} \gamma^{-1}$, for $\delta \in \{\pm 1\}$ and $\gamma \in \pi$, corresponds to multiplying the $i$th row of $X(\mathcal{P})$ by $\delta\gamma$. This does not affect $[X(\mathcal{P})]$.
  \item Changing the order of the factors $g(i,k) x(i,k)^{\varepsilon(i,k)} g(i,k)^{-1}$ in $r_i$ does not affect $X(\mathcal{P})$, hence does not affect $[X(\mathcal{P})]$.
\end{enumerate}

We start by taking the 2-sphere $B(i,1)$ to be $B_i$, and for $k \geq 2$. we take $B(i,k)$ to be a disjoint parallel copy of $B_i$, each contained in the $i$th copy of $(S^2 \times S^2)^\circ$ in $M \# n(S^2 \times S^2)$ where $(-)^\circ$ denotes that we remove the interior of a $4$-disc. Identifying $(S^2 \times S^2) \sm \smash{\cup_{k=1}^{n(i)} B(i,k)}$ with $S^2 \times (S^2 \sm \{\text{$n(i)$ points}\})$ and observing that removing the interior of a $4$-disc does not change the fundamental group, we get an isomorphism
\[\pi_1\Big((S^2 \times S^2)^\circ \sm \cup_{k=1}^{n(i)} B(i,k)\Big) \cong \left\langle x(i,1),\ldots,x(i,n(i)) \,\middle \vert\, {\textstyle \prod}_{k=1}^{n(i)} x(i,k) \right\rangle.\]
This group is free, but it is helpful for the upcoming computation to present it in this manner, as having $n(i)$ generators and a single relation. Using Seifert--van Kampen, this implies
\[\pi_1\Big(M \# n(S^2 \times S^2) \sm \cup_{i=1}^n \cup_{k=1}^{n(i)} B(i,k)\Big) \cong \pi \ast \bigast_{i=1}^n \left\langle x(i,1),\ldots,x(i,n(i)) \,\middle \vert\, {\textstyle \prod}_{k=1}^{n(i)} x(i,k) \right\rangle\]
Here the $x(i,k)$ are represented by meridians of the removed 2-spheres, connected to the basepoint in $M$ by specified paths. Next, for $k \geq 2$ we tube $B(i,k)$ to $B(j_{i,k},1)$, where $j_{i,k}$ is defined by $x(i,k) = x_{j_{i,k}}$. For the tubing, use mutually disjoint, embedded paths, given as follows: go from $B(i,k)$ to the basepoint along the specified path, follow the loop $g(i,k)^{-1}$, and then go from the basepoint to $B(j_{i,k},1)$, again using the specified path. The tubing preserves the orientation if $\varepsilon(i,k)$ is $1$ and reverses it if $\varepsilon(i,k)$ is $-1$. The end result is $n$ embedded $2$-spheres $C_i$.

\begin{figure}
	\centerline{
		\def\svgwidth{5in}
\begingroup%
  \makeatletter%
  \providecommand\color[2][]{%
    \errmessage{(Inkscape) Color is used for the text in Inkscape, but the package 'color.sty' is not loaded}%
    \renewcommand\color[2][]{}%
  }%
  \providecommand\transparent[1]{%
    \errmessage{(Inkscape) Transparency is used (non-zero) for the text in Inkscape, but the package 'transparent.sty' is not loaded}%
    \renewcommand\transparent[1]{}%
  }%
  \providecommand\rotatebox[2]{#2}%
  \newcommand*\fsize{\dimexpr\f@size pt\relax}%
  \newcommand*\lineheight[1]{\fontsize{\fsize}{#1\fsize}\selectfont}%
  \ifx\svgwidth\undefined%
    \setlength{\unitlength}{922.00898899bp}%
    \ifx\svgscale\undefined%
      \relax%
    \else%
      \setlength{\unitlength}{\unitlength * \real{\svgscale}}%
    \fi%
  \else%
    \setlength{\unitlength}{\svgwidth}%
  \fi%
  \global\let\svgwidth\undefined%
  \global\let\svgscale\undefined%
  \makeatother%
  \begin{picture}(1,0.4158607)%
    \lineheight{1}%
    \setlength\tabcolsep{0pt}%
    \put(0,0){\includegraphics[width=\unitlength,page=1]{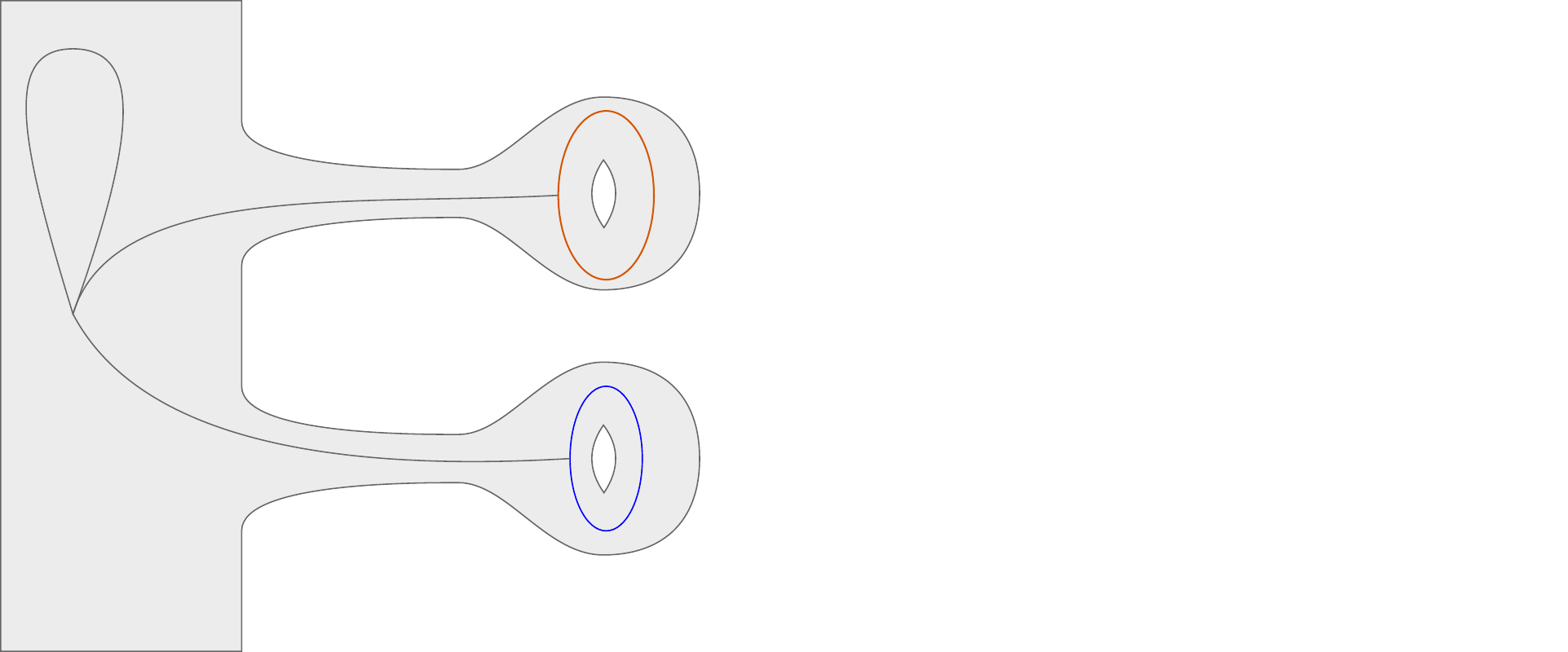}}%
    \put(0.35016802,0.36117607){\color[rgb]{0.4,0.4,0.4}\makebox(0,0)[lt]{\lineheight{1.25}\smash{\begin{tabular}[t]{l}$S^2 \times S^2$\end{tabular}}}}%
    \put(0.34993916,0.19167626){\color[rgb]{0.4,0.4,0.4}\makebox(0,0)[lt]{\lineheight{1.25}\smash{\begin{tabular}[t]{l}$S^2 \times S^2$\end{tabular}}}}%
    \put(0.47878057,0.20922971){\color[rgb]{0.4,0.4,0.4}\makebox(0,0)[lt]{\lineheight{1.25}\smash{\begin{tabular}[t]{l}$\leadsto$\end{tabular}}}}%
    \put(0.04608467,0.05779066){\color[rgb]{0.4,0.4,0.4}\makebox(0,0)[lt]{\lineheight{1.25}\smash{\begin{tabular}[t]{l}$M$\end{tabular}}}}%
    \put(0,0){\includegraphics[width=\unitlength,page=2]{construction.pdf}}%
  \end{picture}%
\endgroup%

	}
	\caption{A parallel copy of $B_1$ (in red) is tubed to a parallel copy of $B_2$ (in blue) along a path given concatenating a preferred path from the former to the basepoint, a loop in $M$, and a preferred path from the basepoint to the latter. In general we perform many such tubings, possibly involving many parallel copies of the same cores, and also need to specify with which orientation the tubings should be performed.}
\end{figure}

Using Seifert--van Kampen (see e.g.~\cite{Boyle}*{Lemma~9}), each tubing has the effect of adding a relation $x(i,k) = g(i,k) x(j_{i,k},1)^{\varepsilon(i,k)} g(i,k)^{-1}$.  Write
\[t(i,k) := g(i,k) x_{j_{i,k}}^{\varepsilon(i,k)} g(i,k)^{-1} = g(i,k) x(j_{i,k},1)^{\varepsilon(i,k)} g(i,k)^{-1} = g(i,k) x(i,k)^{\varepsilon(i,k)} g(i,k)^{-1}.\]
Note that $t(i,1) = x(i,1) = x_1$ and that
 \[\prod_{k=1}^{n(i)} t(i,k) = \prod_{k=1}^{n(i)} g(i,k) x(i,k)^{\varepsilon(i,k)} g(i,k)^{-1} = r_i.\]
Let $\langle y_1 \dots y_\ell \mid s_1,\dots,s_m \rangle$ be a presentation of $\pi$.
 Introducing this and the relations coming from tubing, we obtain
\begin{align*}
    &\pi_1\Big(M \# n(S^2 \times S^2) \sm \cup_{i=1}^n C_i \Big) \cong \\
  &\left\langle \big(y_a\big)_{a=1}^{\ell}, \big(x(i,k)\big)_{k=1}^{n(i)} \, \middle\vert \, \big(s_b\big)_{b=1}^m, {\textstyle \prod}_{k=1}^{n(i)} x(i,k), \big(x(i,k) = t(i,k)\big)_{k=2}^{n(i)} \right\rangle,
\end{align*}
where in each generator or relator in which $i$ appears, $i$ ranges from $1$ to $n$.
The relations $x(i,k) = t(i,k)$ can be cancelled against the generators $x(i,k)$, for $k=2,\dots,n(i)$ and $i=1,\dots,n$.   When doing this, we must substitute using these relations into  ${\textstyle \prod}_{k=1}^{n(i)} x(i,k)$. This yields
\[\prod_{k=1}^{n(i)} x(i,k) = x(i,1) \cdot \prod_{k=2}^{n(i)} x(i,k) = t(i,1) \cdot \prod_{k=2}^{n(i)} t(i,k) = \prod_{k=1}^{n(i)} t(i,k) = r_i. \]
We obtain
\[ \left\langle y_1,\dots,y_{\ell}, x(i,1), i=1,\dots,n \, \middle\vert \, s_1,\dots,s_m, {\textstyle \prod}_{k=1}^{n(i)} t(i,k) \right\rangle \cong \frac{\pi \ast \langle x_1,\ldots,x_n \rangle}{(r_1,\ldots,r_n)}.\]
The latter group is exactly $\pi(\mathcal{P})$.
Finally, using Seifert--van Kampen, attaching the copies of $D^3 \times S^1$ from the upper boundaries of the 3-handles does not change the fundamental group, and so the inclusion induces an isomorphism
\[ \pi_1(N) \overset{\cong}\longleftarrow \pi_1\Big(M \# n(S^2 \times S^2) \sm \cup_{i=1}^n C_i \Big) \cong \pi(\mathcal{P}).\]
Tracing through the construction, the map induced by the inclusion
\[\pi_1(N) \cong \pi(\mathcal{P}) \longrightarrow \pi_1(W) \cong \pi\]
is identified under the isomorphisms with the projection given by sending each $x_i$ to $e$.

Let $x = [X] = [X(\mathcal{P})] \in \Wh(\pi)$. If $\dim(x) =0,2$, then it follows that the map $\pi_1(N) \to \pi_1(W)$ induced by the inclusion is an isomorphism, and so $W$ is an $h$-cobordism. By construction $\tau(W,M) = x$. On the other hand, if $\dim(x) =3$, then $\pi_1(N) \to \pi_1(W)$ is not an isomorphism, and so $W$ arising from the standard construction fails to be an $h$-cobordism. This completes the proof of \cref{theorem:obvious construction does not work}.

\begin{example}
For $\pi = C_5 = \langle g \mid g^5 \rangle$ and $x \in \Wh(\pi)$ represented by the unit $1-g+g^2$, we can assume $\mathcal{P}$ to have a single generator $x$ and a single relation $x(gx^{-1}g^{-1})(g^2xg^{-2})$. Then the above construction has outgoing boundary where $\pi_1(N)$ surjects onto $S_5$, so is not isomorphic to $C_5$ \cite[Example (4.5)]{MCohen}.  That is, $\pi(\mathcal{P})$ is a proper admissible extension. A computer search with GAP among similar admissible extensions of $C_5$ yielded no trivial examples.
\end{example}

\begin{remark}
We finish by commenting on how far a general $h$-cobordism is from the standard construction. Standard handle trading techniques yield the following: every $h$-cobordism $W \colon M \leadsto N$ can be assumed to only have $2$- and $3$-handles, with $2$-handles necessarily trivially attached, as many $3$-handles as $2$-handles, and the $3$-handles attached simultaneously. Thus it is determined by disjoint embeddings $\varphi_i \colon S^2 \hookrightarrow M \# n(S^2 \times S^2)$ with trivialised normal bundles. However, these need not be isotopic to the type of 2-spheres used in the standard construction. It is not apparent whether it is possible to extract from these 2-spheres an admissible presentation or relate its torsion to $\tau(W,M)$.
\end{remark}

\def\MR#1{}
\bibliography{biblio}
\end{document}